\def\uOdt{u_{\mathcal{O},\delta t}}
\begin{document}
\title*{Convergence of finite volume scheme\\ for degenerate parabolic problem\\ with zero flux boundary condition}
\titlerunning{Convergence of FV scheme for degenerate parabolic zero-flux problem}
\author{Boris Andreianov \and Mohamed Karimou Gazibo}
\institute{Boris Andreianov \at Laboratoire de Math\'{e}matiques CNRS UMR 6623, Universit\'{e} de Franche-Comte, 16 route de Gray, 25030 Besan{\c c}on, France\;\;\; and\;\;\; Institut F\"ur Mathematik, Technische Universit\"at Berlin, Stra{\ss}e des 17. Juni 136, 10623 Berlin, Germany;\;\; \email{bandreia@univ-fcomte.fr}
\and Mohamed Karimou Gazibo \at Laboratoire de Math\'{e}matiques CNRS  UMR 6623, Universit\'{e} de Franche-Comte, 16 route de Gray, 25030 Besan{\c c}on, France;\;\;\email{mgazibok@univ-fcomte.fr} }
%
%
\maketitle
\vspace*{-5pt}
\abstract*{This note is devoted to the study of the finite volume methods used in the discretization of degenerate parabolic-hyperbolic equation with zero-flux boundary condition. The  notion of an entropy-process solution, successfully used for the Dirichlet problem, is insufficient to obtain a uniqueness and convergence result because of a lack of regularity of solutions on the boundary. We infer the uniqueness of an entropy-process solution using the tool of the nonlinear semigroup theory by passing to the new abstract notion of integral-process solution. Then, we prove that numerical solution converges to the unique entropy solution as the mesh size tends to 0. }
\vspace*{-3pt}
\abstract{This note is devoted to the study of the finite volume methods used in the discretization of degenerate parabolic-hyperbolic equation with zero-flux boundary condition. The  notion of an entropy-process solution, successfully used for the Dirichlet problem, is insufficient to obtain a uniqueness and convergence result because of a lack of regularity of solutions on the boundary. We infer the uniqueness of an entropy-process solution using the tool of the nonlinear semigroup theory by passing to the new abstract notion of integral-process solution. Then, we prove that numerical solution converges to the unique entropy solution as the mesh size tends to 0. }
\section{Introduction}\label{sec:1}
Our goal is to study convergence of a finite volume scheme  for a  degenerate parabolic equation with zero-flux boundary condition in a regular bounded domain $\Omega\in\mathbb R^\ell$ arising, e.g., in sedimentation and traffic models:
\begin{equation}\label{eq:problemP}
\tag{\text{\rm P}}
\left\{\begin{array}{rll}
u_t+\mbox{div }f(u)-\Delta\phi(u)&=0         &\mbox{ in } \;\;\; Q=(0,T)\times\Omega,\\
                     u(0,x)&=u_0(x)   &\mbox{ in } \;\;\; \Omega,\\
(f(u)-\nabla\phi(u)).\eta&=0           &\mbox{ on } \;\;\; \Sigma=(0,T)\times\partial\Omega.
\end{array}\right.
\end{equation}
Here $\phi$  is a non-decreasing Lipschitz continuous function, moreover, there exists $u_c\in [0, u_{\max}]$ with $u_{\max}>0$ such that $\phi|_{[0,u_c]}\equiv 0$ but $\phi'|_{[u_c,u_{\max}]}>0$. The case $u_c=u_{max}$ was understood in \cite{BFK}. In the range $[0,u_c]$ of values of $u$, $(P)$ degenerates into a hyperbolic problem, and admissibility criteria of Kruzhkov type are needed to single out the unique and physically motivated weak solution (see, e.g., \cite{VAS,BFK}).
 We require that the flux function $f$ is Lipschitz,  genuinely nonlinear on $[0,u_c]$; moreover, $[0,u_{\max}]$ is an invariant domain for the evolution of $(P)$ due to assumption
\begin{equation}\label{fmax}\tag{H1}
f(0)=f(u_{\max})=0,\;\;\; u_0\in L^\infty(\Omega;[0,u_{\max}])
\end{equation}
(the latter means the space of measurable on $\Omega$ functions with values in $[0,u_{\max}]$).
In the work \cite{BG}, inspired by \cite{BFK} we proposed a new entropy formulation of $(P)$ saying that $u\in L^\infty(Q;[0,u_{\max}])$ is an entropy solution of $(P)$  if $u\in C([0,T];L^1(\Omega))$ with $u(0)=u_0$, $\phi(u)\in L^2(0,T;H^1(\Omega))$ and $\forall k\in [0,u_{\max}]$
\begin{equation}\label{ESP}
  |u-k|_t + \mbox{div }\bigl({sign}(u-k) \bigl[f(u)-f(k)-\nabla\phi(u)\bigr]\bigr)\;\leq\; |f(k).\eta|\,d{\mathcal{H}}^{\ell-1}
  \end{equation}
in $\mathcal D'((0,T)\times\overline{\Omega})$, where $\eta$ is the exterior unit normal vector to the boundary $\Sigma=(0,T)\times\partial\Omega$ and the last term is taken with respect to the Hausdorff measure ${\mathcal{H}}^{\ell-1}$ on $\Sigma$.  Contrary to the Dirichlet case (cf. \cite{
EGHMichel}) where the boundary condition is relaxed,
\eqref{ESP} implies that zero-flux condition in $(P)$ holds in the weak sense.

Existence of an entropy solution to $(P)$ can be obtained by standard vanishing viscosity method,
relying in particular on the \emph{strong compactness} arguments derived from genuine nonlinearity of $f|_{[0,u_c]}$ and non-degeneracy of $\phi|_{[u_c,u_{max}]}$, see~\cite{PAN-comp}. But in order to prove uniqueness,
 one faces a serious difficulty (not relevant in the case $u_c=u_{max}$, \cite{BFK}) related to the lack of regularity of the flux $\mathcal F[u]:= f(u)-\nabla\phi(u)$
and specifically, to the weak sense in which the normal component $\mathcal F[u].\eta$ of the flux annulates on $\Sigma$. Techniques of nonlinear semigroup theory (see, e.g., \cite{BCP,BarthelemyBenilan}) can be used to circumvent this regularity problem in some cases (see \cite{BF,BG}) and to prove well-posedness for $(P)$ in the sense \eqref{ESP}. Let us present the key arguments: indeed, they are also important for study of convergence of the Finite Volume scheme for $(P)$, which is the goal of this note.
The standard doubling of variables method based upon formulation \eqref{ESP} readily leads to the uniqueness and $L^1$ contraction property
  \begin{equation}\label{eq:L1contraction}
\forall t\in [0,T]\;\;\;    \|u(t,\cdot)-\hat u(t,\cdot)\|_{L^1}\leq \|u_0-\hat u_0\|_{L^1}
\end{equation}
if we compare two solutions $u,\hat u$ such that the strong (in the sense of $L^1$ convergence, see \cite{VAS,PAN1}) trace of the normal flux $\mathcal F[u].\eta$ at the boundary exists. In the sequel, we call such solutions \emph{trace-regular}.
Every entropy solution is a trace-regular in the case of the pure hyperbolic problem (case $u_c=u_{\max}$, see \cite{VAS,PAN1,BFK}).
The idea of symmetry breaking in the doubling of variables (see \cite{BF}) permits an extension
of \eqref{eq:L1contraction} to a kind of weak-strong comparison principle where $u$ is a general solution
and $\hat u$ is a trace-regular solution. When a sufficiently large family of trace-regular solutions
is available, uniqueness of a general solution and principle \eqref{eq:L1contraction} may follow by density arguments.
 A closely related technique consists in exploiting the above weak-strong comparison arguments using the idea of integral solution and somewhat stronger regularity properties of \emph{stationary solutions}. E.g.,
for the pure parabolic one ($u_c=0$, see \cite{BF}) every entropy solution of the stationary problem
\begin{equation}\label{eq:problemS}
\tag{\text{\rm S}}
\hat u+\mbox{div }f(\hat u)-\Delta\phi(\hat u)=g \;\text{ in $\Omega$}, \;\;
(f(\hat u)-\nabla\phi(\hat u)).\eta=0  \;\text{ on $\partial\Omega$}
\end{equation}
with $g\in L^\infty(\Omega)$ is trace-regular if $f\circ\phi^{-1}\in \mathbf{C}^{0,\gamma}$, $\gamma>0$ (see \cite{BF}). This observation, in conjunction with the use of integral solutions (\cite{BCP}) of abstract evolution problem
\begin{equation}\label{eq:Abstr-Pb}
   u' + Au \ni h, \;\;\;u(0)=u_0
\end{equation}
 for suitably defined operator $A=A_{f, \phi}$ (problem $(S)$ taking the form $(\text{Id}+A_{f,\phi})u\ni g$) permits to get uniqueness of entropy solution in \cite{BF}, for the parabolic case $u_c=0$.
 Let us stress that the question of uniqueness for $(P)$ with $u_c\notin \{0,u_{max}\}$ and $\ell>1$ remains open.
 The one-dimensional hyperbolic-parabolic case ($\ell=1$, $\Omega=(a,b)$ with arbitrary $ u_c \!\in[0, u_{\max}]$) has been treated by the authors in \cite{BG}, using the above abstract approach along with the elementary observation that yields trace-regularity:
\begin{equation*}\label{eq:trace-regularity-1D}
\bigl(f(\hat u)-\phi(\hat u)_x\bigr)_x=g-u\in L^\infty((a,b)) \;\;\Rightarrow\;\; \mathcal F[u]=\bigl(f(\hat u)-\phi(\hat u)_x\bigr) \in \mathbf{C}([a,b]).
\end{equation*}

Another essential aspect of the study of $(P)$ is to justify convergence of numerical approximations.
The difference with the existence proof is that, for numerical approximations, the use of \emph{strong compactness}
arguments is very technical, and \emph{weak compactness} methods are often preferred.
 Such study relying on \emph{nonlinear weak-$*$ compactness} technique of \cite{eymard2000finite,EGHMichel}  is our goal in this note. We study a finite volume scheme  discretization in the spirit of \cite{EGHMichel} for $(P)$ on a family of admissible meshes $(\mathcal O_h)_h$ with implicit time stepping. According to the standard weak compactness estimates, as for the Dirichlet problem (\cite{EGHMichel}) approximate solutions $u^h:=u_{\mathcal O_h,\delta t_h}$ converge up to a subsequence, as the discretization size $h$ goes to zero, towards an \emph{entropy-process solution} $\nu$. This notion closely related to  Young measures' techniques (see \cite{eymard2000finite} and references therein) incorporates dependence on an additional variable $\alpha\in[0,1]$ which may represent oscillations in the family $(u^h)_{h}$. It remains to prove the uniqueness of an entropy-process solution which implies the independence of $\nu(t,x,\alpha)$ on $\alpha$ so that $u(t,x)\equiv \nu(t,x,\alpha)$ is an entropy solution of $(P)$. As for the proof of uniqueness of an entropy solution discussed above, we face the major difficulty due to
the lack of regularity of $\mathcal F[u].\eta$.
Hence, we found it useful to define the  new notion of \emph{integral-process solution}
 in the framework of abstract problem \eqref{eq:Abstr-Pb}. 
Following the pattern of the uniqueness proofs in \cite{BF,BG}, we compare an entropy-process solution of $(P)$ and a trace regular solution of $(S)$, then we prove that an entropy-process solution of $(P)$ is an integral-process solution of \eqref{eq:Abstr-Pb} defined for an appropriate $m$-accretive operator $A_{f,\phi}$. The convergence result holds due to the fact that the integral-process solution coincides with the unique integral solution of \eqref{eq:Abstr-Pb};
and the latter one coincides with the unique entropy solution of $(P)$ in the sense \eqref{ESP}.

The remainder of this note is organized as follows. In Section~\ref{sec:2} we present our scheme. In Section~\ref{sec:3} we present the standard steps of convergence arguments for the problem $(P)$, obtained as for Dirichlet problem (\cite{EGHMichel}). In Section~\ref{sec:4}, we achieve the  convergence result using classical and new tools of the nonlinear semigroup theory. In Remark~\ref{rem:direct-IPS}, we sketch a convergence argument for Finite Volume schemes based upon a direct use of integral-process solutions, bypassing the entropy-process ones.

\vspace*{-5pt}\section{Description of the finite volume scheme for $(P)$}\label{sec:2}
Let us begin with considering an admissible mesh $\mathcal{O}$  of $\Omega$ (see \cite{eymard2000finite,EGHMichel}) for space discretization and using the conventional notation present in the main literature.
  Because we consider the zero-flux boundary condition, we don't need to distinguish between interior and exterior control volumes $K$, only inner interfaces $\sigma$ between volumes are needed in order to formulate the scheme.  
 For $K\in\mathcal{O}$ and $\sigma\in{\varepsilon}_K$, we denote by $\tau_{K,\sigma}$ the transmissivity coefficient. For the approximation of the convective term, we consider the numerical convection fluxes $F_{K,\sigma}:\mathbb R^2\longrightarrow\mathbb R$ that are consistent with $f$, monotone, Lipschitz regular, and conservative (see \cite{eymard2000finite,EGHMichel
  }). 
  
The values of the discrete unknowns $u_K^{n+1}$ for all control volume $K\in\mathcal O$, and $n\in\mathbb N$ are defined  thanks to the following relations:  first we initialize the scheme by
\begin{align}\label{esti0}
u_K^0=\frac{1}{m(K)}\displaystyle\int_K u_0(x)dx\;\;\;\forall K\in\mathcal O,
\end{align}
then,  we use the implicit scheme for the discretization of problem $(P)$:\\[1pt] $\forall n>0,\;\forall K\in\mathcal O$,
\begin{align}\label{esti1}
 m(K)\frac{u_K^{n+1}\!-\!u_K^n}{\delta t}+\displaystyle\sum_{\sigma\in\varepsilon_K}
 \Bigl( F_{K,\sigma}(u_{K}^{n+1}\!,u_{K,\sigma}^{n+1})
 -\tau_{K,\sigma}\bigl(\phi(u_{K,\sigma}^{n+1})\!-\!\phi(u_{K}^{n+1})\bigl) \Bigr)=0.
\end{align}
If the scheme has a solution $(u_{K}^n)_{K,n}$, we will say that the approximate solution to $(P)$ is the piecewise constant function $\uOdt(t,x)$ defined by:
\begin{equation}\label{eq:def-discrsol}
\uOdt(t,x)=u^{n+1}_K \mbox{ for } x\in K \mbox{ and } t\in (n\delta t,(n+1)\delta t].
\end{equation}
A weakly consistent discrete gradient $\nabla_{\!\mathcal O}\phi(\uOdt)$
is defined ``per diamond''; we refer to \cite{mathese} for details.
Let us stress that the zero-flux boundary condition is included in the scheme, since the flux terms on $\partial K\cap \partial \Omega$ are set to be zero in equations \eqref{esti1}.

\vspace*{-5pt}\section{Analysis of the approximate solution: classical arguments}\label{sec:3}
Following the guidelines of \cite{eymard2000finite,EGHMichel}, we can justify uniqueness of discrete solutions,
obtain several uniform estimates (confinement of values of $\uOdt$ in $[0,u_{max}]$, weak $BV$ estimate for
$\uOdt$, discrete $L^2(0,T;H^1(\Omega))$ estimate of $\phi(\uOdt)$), and derive existence of $\uOdt$. We refer to the PhD thesis \cite{mathese} of the second author for details, with a particular emphasis on the treatment of boundary volumes.
 It follows that the discrete solution $\uOdt$ satisfies the approximate continuous entropy formulation.
\begin{theorem}\label{th:approx-ineq}
Let $\uOdt$ be the approximate solution of the problem $(P)$ defined by \eqref{esti0},\eqref{esti1},\eqref{eq:def-discrsol}. Then the following approximate entropy inequalities hold:\\
for all $k\in[0,u_{\max}]$, for all $\xi\in\mathcal{C}^\infty([0,T)\times\mathbb R^\ell)$, $\xi\geq0$,\\[-16pt]
\begin{align}\label{content}
&\displaystyle\int_0^T\!\!\!\int_\Omega\displaystyle \left\{|\uOdt-k|\xi_t+sign(\uOdt-k)\Bigl[f(\uOdt)-f(k)-\nabla_{\!\mathcal O}\phi(\uOdt)\Bigr].\nabla\xi\displaystyle\right\}dxdt\nonumber\\[-2pt]
&+\displaystyle\int_0^T\!\!\!\int_{\partial\Omega} \left|f(k).\eta(x)\right|\xi(t,x) d{\mathcal{H}}^{\ell-1}(x)dt+\displaystyle\int_\Omega |u_0-k|\xi(0,x)dx\geq -\upsilon_{\mathcal{O},\delta t}(\xi),
\end{align}

\vspace*{-3pt}\noindent
where\;\; $\forall \xi\in\mathcal{C}^\infty([0,T)\times\mathbb R^\ell)$, $\upsilon_{\mathcal{O},\delta t}(\xi)\rightarrow0$ when $h\rightarrow 0$.
\end{theorem}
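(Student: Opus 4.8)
The plan is to derive \eqref{content} from the scheme \eqref{esti1} along the classical route for monotone finite volume schemes: first produce local discrete entropy inequalities, then multiply by a nonnegative test function and sum by parts, and finally match each resulting discrete sum with its continuous counterpart in \eqref{content} while collecting all mesh-dependent discrepancies into $\upsilon_{\mathcal{O},\delta t}(\xi)$.

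First I would establish, for every $K\in\mathcal O$, every $n$ and every $k\in[0,u_{\max}]$, a local discrete entropy inequality, combining three convexity/monotonicity facts. (i)~For the implicit time term, convexity of $|\cdot-k|$ gives $\mathrm{sign}(u_K^{n+1}-k)(u_K^{n+1}-u_K^n)\ge |u_K^{n+1}-k|-|u_K^n-k|$, so the implicit stepping produces the correct sign. (ii)~Monotonicity of $F_{K,\sigma}$ lets one replace $\mathrm{sign}(u_K^{n+1}-k)\bigl(F_{K,\sigma}(u_K^{n+1},u_{K,\sigma}^{n+1})-F_{K,\sigma}(k,k)\bigr)$ by the numerical entropy flux $\Phi_{K,\sigma}(a,b):=F_{K,\sigma}(a\vee k,b\vee k)-F_{K,\sigma}(a\wedge k,b\wedge k)$. (iii)~Monotonicity of $\phi$ turns the signed diffusion term into the dissipative flux $-\tau_{K,\sigma}\bigl(|\phi(u_{K,\sigma}^{n+1})-\phi(k)|-|\phi(u_K^{n+1})-\phi(k)|\bigr)$. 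The crucial boundary contribution appears precisely here: since the scheme omits the fluxes on the faces of $K$ lying on $\partial\Omega$, comparing with the constant state $k$ and using consistency $F_{K,\sigma}(k,k)=m(\sigma)\,f(k)\cdot\nu_{K,\sigma}$ together with the closing relation $\sum_{\sigma}m(\sigma)\,\nu_{K,\sigma}=0$ (sum over all faces of $K$) leaves a residual bounded by $\sum_{\sigma\subset\partial\Omega}m(\sigma)\,|f(k)\cdot\nu_{K,\sigma}|$ on the right-hand side, which is the discrete version of the boundary term in \eqref{content}.

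Next I would multiply this local inequality by $\xi_K^n:=\xi(n\delta t,x_K)\ge 0$ and sum over $K$ and $n$. Using conservativity ($\Phi_{K,\sigma}=-\Phi_{L,\sigma}$ and $\tau_{K,\sigma}=\tau_{L,\sigma}$) one performs Abel summation in time and space, transferring all discrete differences onto $\xi$; the time summation by parts produces both the discrete form of $\int_0^T\!\int_\Omega|u-k|\xi_t$ and, from the $n=0$ endpoint together with the initialization \eqref{esti0}, the term $\int_\Omega|u_0-k|\xi(0,\cdot)$. One then identifies the interface convection sum with $\int_0^T\!\int_\Omega \mathrm{sign}(\uOdt-k)(f(\uOdt)-f(k))\cdot\nabla\xi$ via consistency of $\Phi$; the diffusion sum with the discrete-gradient term $-\int_0^T\!\int_\Omega \mathrm{sign}(\uOdt-k)\,\nabla_{\!\mathcal O}\phi(\uOdt)\cdot\nabla\xi$ via the ``per diamond'' gradient and the discrete $L^2(0,T;H^1(\Omega))$ bound on $\phi(\uOdt)$; and the residual sum with $\int_0^T\!\int_{\partial\Omega}|f(k)\cdot\eta|\,\xi\,d\mathcal H^{\ell-1}$ via mesh regularity and smoothness of $\partial\Omega$ and $\xi$. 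Every gap between a discrete sum and its integral form is Taylor-expanded against $\xi\in\mathcal C^\infty$ and placed into $\upsilon_{\mathcal O,\delta t}(\xi)$.

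I expect the main obstacle to be proving $\upsilon_{\mathcal O,\delta t}(\xi)\to 0$ for the convection part. The numerical entropy flux $\Phi_{K,\sigma}(u_K^{n+1},u_{K,\sigma}^{n+1})$ is consistent only up to an error of order $|u_K^{n+1}-u_{K,\sigma}^{n+1}|$; summed over all interfaces and time steps and weighted by $m(\sigma)$ and $\|\nabla\xi\|_\infty$, this is controlled only by invoking the \emph{weak $BV$ estimate} and Cauchy--Schwarz, yielding a bound of order $\sqrt h\to 0$. This is the standard delicate point of monotone finite volume convergence proofs. By contrast, the time- and diffusion-consistency errors are routine given the $L^\infty$ confinement of $\uOdt$ in $[0,u_{\max}]$ and the discrete $H^1$ estimate, and the genuinely problem-specific ingredient, the emergence of the boundary term $|f(k)\cdot\eta|$, is handled cleanly by the zero-flux structure of \eqref{esti1} described above.
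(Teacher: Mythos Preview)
Your proposal is correct and follows essentially the same route the paper indicates: the paper does not spell out a proof but explicitly refers to the guidelines of \cite{eymard2000finite,EGHMichel} and to \cite{mathese} for the details, highlighting precisely the ingredients you use (confinement in $[0,u_{\max}]$, the weak $BV$ estimate, the discrete $L^2(0,T;H^1(\Omega))$ bound on $\phi(\uOdt)$, and special attention to boundary volumes). Your derivation of the boundary term $|f(k)\cdot\eta|$ from the omitted boundary fluxes in \eqref{esti1}, and your identification of the weak $BV$ estimate as the device that makes the convective consistency error vanish at rate $\sqrt h$, are exactly the problem-specific points the paper singles out.
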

In order to pass to the limit in \eqref{content} using only the $L^\infty$ bound on $\uOdt$, one can adapt the notion of an entropy-process solution to problem $(P)$ in the entropy sense \eqref{ESP}.
\begin{definition}\label{entrprosol}
Let $\mu\in L^\infty(Q\times(0,1))$. The function $\mu=\mu(t,x,\alpha)$ is called an entropy-process solution to the problem $(P)$ if $\forall k\in  [0,u_{\max}]$,  $\forall\xi\in \mathcal{C}^\infty([0,T)\times\mathbb R^\ell)$, with $\xi\geq 0$, the following inequalities  hold:\\[-16pt]
\begin{eqnarray*}\label{ESP1}
&\displaystyle\int_0^T\int_\Omega\int_0^1\displaystyle \left\{|\mu-k|\xi_t+sign(\mu-k)\Bigl[f(\mu)-f(k)\Bigr].\nabla\xi\displaystyle\right\}dxdtd\alpha\nonumber\\[-4pt]
&-\displaystyle\int_0^T\int_\Omega\nabla|\phi(u)-\phi(k)|.\nabla\xi dxdt+\displaystyle\int_0^T\int_{\partial\Omega} \left|f(k).\eta(x)\right|\xi(t,x) d{\mathcal{H}}^{\ell-1}(x)dt\nonumber\\[-4pt]
&+\displaystyle\int_\Omega |u_0-k|\xi(0,x)dx\geq 0,
\;\;\;\text{where\; $u(t,x):=\displaystyle\int_0^1\mu(t,x,\alpha)d\alpha$}.
\end{eqnarray*}
\end{definition}
 From Theorem~\ref{th:approx-ineq} we derive the following result which, however, will not be conclusive. In the sequel, we will upgrade (or circumvent, see Remark~\ref{rem:direct-IPS}) this claim.
\begin{proposition}\label{conventropyprocess}
Let $\uOdt$ be the approximate solution of the problem $(P)$ defined by  \eqref{esti0}, \eqref{esti1}. There exists an entropy-process solution $\mu$ of $(P)$ in the sense of Definition \ref{entrprosol} and a subsequence of $(\uOdt)_{\mathcal{O},\delta t}$ , such that:\\[1pt]
$\bullet$ The sequence $(\uOdt)_{\mathcal{O},\delta t}$ converges to $\mu$ in the nonlinear weak-$*$ sense.\\
$\bullet$
 Moreover, $(\phi(\uOdt))_{\mathcal{O},\delta t}$ converges strongly in $L^2(Q)$ to $\phi(u)$,
$u=\int_0^1\mu(t,x,\alpha)d\alpha$,\\
\hspace*{6pt} and
$(\nabla_{\mathcal{O}}\phi(\uOdt))_{\mathcal{O},\delta t}\rightharpoonup\nabla \phi(u)$ in $(L^2(Q))^\ell$ weakly,
as $h,\delta t\to 0$.
\end{proposition}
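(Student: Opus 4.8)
The plan is to establish Proposition~\ref{conventropyprocess} by combining the uniform estimates announced in Section~\ref{sec:3} with the nonlinear weak-$*$ compactness machinery of \cite{eymard2000finite,EGHMichel}, and then by passing to the limit in the approximate entropy inequalities \eqref{content} of Theorem~\ref{th:approx-ineq}. First I would invoke the $L^\infty$ confinement estimate, which guarantees that the family $(\uOdt)$ is bounded in $L^\infty(Q;[0,u_{\max}])$. By the fundamental theorem on nonlinear weak-$*$ limits (the Young measure / nonlinear weak-$*$ compactness result, see \cite{eymard2000finite}), there exists a subsequence and a function $\mu\in L^\infty(Q\times(0,1))$ such that $(\uOdt)$ converges to $\mu$ in the nonlinear weak-$*$ sense; that is, for every continuous $g$, $g(\uOdt)\rightharpoonup\int_0^1 g(\mu(\cdot,\cdot,\alpha))\,d\alpha$ in $L^\infty(Q)$ weak-$*$. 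This step is essentially automatic given only the $L^\infty$ bound, which is precisely why weak compactness is preferred over strong compactness here.

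The second ingredient is the convergence of $\phi(\uOdt)$ and its discrete gradient. Here I would use the discrete $L^2(0,T;H^1(\Omega))$ estimate on $\phi(\uOdt)$ from Section~\ref{sec:3}: it provides a uniform bound on $\|\nabla_{\!\mathcal O}\phi(\uOdt)\|_{L^2(Q)}$, so up to a further subsequence $\nabla_{\!\mathcal O}\phi(\uOdt)\rightharpoonup\chi$ weakly in $(L^2(Q))^\ell$ for some limit $\chi$. To identify $\chi=\nabla\phi(u)$ with $u=\int_0^1\mu\,d\alpha$, the key is to upgrade the convergence of $\phi(\uOdt)$ from weak to \emph{strong} in $L^2(Q)$. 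Since $\phi$ is monotone nondecreasing and Lipschitz, the nonlinear weak-$*$ limit of $\phi(\uOdt)$ is $\int_0^1\phi(\mu(\cdot,\cdot,\alpha))\,d\alpha$; the monotonicity of $\phi$ then forces this to equal $\phi(u)$ up to the usual Minty-type argument, and a standard comparison of energies (the discrete $H^1$ bound controls oscillations of $\phi(\uOdt)$, while the weak limit is identified independently) yields strong $L^2$ convergence $\phi(\uOdt)\to\phi(u)$. Once strong convergence of $\phi(\uOdt)$ is in hand, the weak consistency of the ``per-diamond'' discrete gradient operator $\nabla_{\!\mathcal O}$ allows one to integrate by parts against smooth test functions and identify the weak limit $\chi$ as the distributional gradient $\nabla\phi(u)$, which also shows $\phi(u)\in L^2(0,T;H^1(\Omega))$.

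With these convergences established, the final step is to pass to the limit $h\to 0$ in each term of \eqref{content}. The entropy dissipation term $|\uOdt-k|\xi_t$ and the convective term $\operatorname{sign}(\uOdt-k)[f(\uOdt)-f(k)].\nabla\xi$ converge by the nonlinear weak-$*$ convergence (these are continuous functions of $\uOdt$, so their limits are the corresponding $\alpha$-averages), producing the first line of Definition~\ref{entrprosol}. The parabolic term $\operatorname{sign}(\uOdt-k)\nabla_{\!\mathcal O}\phi(\uOdt).\nabla\xi$ is handled using the strong $L^2$ convergence of $\phi(\uOdt)$ together with the weak convergence of $\nabla_{\!\mathcal O}\phi(\uOdt)$: the product of a strongly convergent sequence by a weakly convergent one passes to the limit, and the Kruzhkov chain rule lets one rewrite $\operatorname{sign}(u-k)\nabla\phi(u)=\nabla|\phi(u)-\phi(k)|$ (valid because $\phi$ is monotone so that $\operatorname{sign}(u-k)=\operatorname{sign}(\phi(u)-\phi(k))$ on the set where $\nabla\phi(u)\neq 0$), yielding the $-\int_0^T\!\!\int_\Omega\nabla|\phi(u)-\phi(k)|.\nabla\xi$ term. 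The boundary term and the initial-data term are independent of $h$ (respectively converge trivially), and the residual $\upsilon_{\mathcal O,\delta t}(\xi)\to 0$ by Theorem~\ref{th:approx-ineq}.

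I expect the main obstacle to be the rigorous identification in the second step, namely proving the \emph{strong} $L^2(Q)$ convergence of $\phi(\uOdt)$ and simultaneously identifying $\lim\nabla_{\!\mathcal O}\phi(\uOdt)=\nabla\phi(u)$. The subtlety is that nonlinear weak-$*$ convergence only gives weak information, and one must rule out oscillations in $\phi(\uOdt)$; the discrete $H^1$ estimate must be combined carefully with a discrete-to-continuous gradient reconstruction (Kolmogorov or Rellich-type compactness in the spirit of \cite{eymard2000finite,EGHMichel}) and with the monotonicity of $\phi$ to close the Minty argument, all while keeping track of the per-diamond definition of $\nabla_{\!\mathcal O}$ near boundary volumes as emphasized in \cite{mathese}.
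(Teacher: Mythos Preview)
Your proposal is correct and follows precisely the approach of the paper: the paper's own proof consists solely of a reference to \cite{EGHMichel,ABK}, stating that the argument is essentially the same as in those works. You have simply unpacked that citation --- the $L^\infty$ bound yielding nonlinear weak-$*$ compactness, the discrete $L^2(0,T;H^1)$ and weak $BV$ estimates yielding strong $L^2$ compactness of $\phi(\uOdt)$ and identification of its gradient limit, and the passage to the limit in \eqref{content} --- which is exactly the strategy of those references.
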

\begin{proof}\smartqed The proof is essentially the same as in main reference papers dealing with finite volume scheme for degenerate parabolic equations (see \cite{EGHMichel,ABK}).
\qed
\end{proof}

\vspace*{-5pt}\section{Reduction of entropy-process solution: semigroup arguments}\label{sec:4}
In the context of the Dirichlet problem (see \cite{eymard2000finite,EGHMichel}) there holds the uniqueness and reduction result
stating that an entropy-process solution $\mu$ is $\alpha$-independent, so that it reduces to an entropy solution.
The lack of regularity of the fluxes at the boundary makes it difficult to prove the analogous result with zero-flux conditions. Here, we show how this difficulty can be bypassed, using classical tools and a new notion of \emph{integral-process solution} in the abstract context of nonlinear semigroup theory (\cite{BCP}).

\vspace{-5pt}\subsection{Notion of integral-process solution and equivalence result}
Given a Banach space $X$ and  an accretive operator $A\subset X\times X$,  $u\in C([0,T];X)$ is called integral solution (see B\'enilan et al.~\cite{BCP,BarthelemyBenilan}) of the abstract evolution problem \eqref{eq:Abstr-Pb}
if, $\|\cdot\|$ being the norm and $[u,v]:=\lim_{\lambda\downarrow 0} \frac{\|u+\lambda v\|-\|u\|}\lambda$ the bracket on $X$, one has $u(0)=u_0$ and the following family of inequalities holds:
\begin{equation*}\label{eq:integral-sol}
\forall (\hat u,\hat z)\in A \;\;\;\|u(t)\!-\!\hat u\| - \|u(s)\!-\!\hat u\|\leq \int_s^t [u(\tau)\!-\!\hat u,h(\tau)\!-\!\hat z], \;\;\; 0\leq s\leq t\leq T.
\end{equation*}
For $m$-accretive operators the classical in the nonlinear semigroup theory notion of mild solution coincides with the notion of integral solution, so that we have
\begin{proposition}
 Assume that $A$ is $m$-accretive, with $\overline{\text{Dom}(A)}^{\|\cdot\|_X}=X$.
 Then for any $h\in L^1((0,T);X)$, $u_0 \in X$ there exists a unique integral solution of
 \eqref{eq:Abstr-Pb}.
 \end{proposition}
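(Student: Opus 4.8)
The statement is the classical Crandall--Liggett--B\'enilan theorem, so the plan is to follow the standard route of nonlinear semigroup theory; in the note itself one would simply invoke \cite{BCP,BarthelemyBenilan}. To reconstruct the argument I would first exploit $m$-accretivity to build the solution by the implicit Euler scheme. For each $\lambda>0$ the resolvent $J_\lambda:=(\mathrm{Id}+\lambda A)^{-1}$ is, by accretivity, a single-valued contraction, and by the range condition $R(\mathrm{Id}+\lambda A)=X$ it is defined on all of $X$. Given a subdivision $0=t_0<\dots<t_N=T$ with steps $\delta_i$ and piecewise-constant data $h_i$ approximating $h$ in $L^1((0,T);X)$, I solve
\[
\frac{u_i-u_{i-1}}{\delta_i}+Au_i\ni h_i,\qquad u_0\ \text{given},
\]
which is uniquely solvable through $u_i=J_{\delta_i}(u_{i-1}+\delta_i h_i)$, and I assemble the piecewise-constant approximation $u_\varepsilon$ (here $\varepsilon$ encodes the mesh size and the data discretization).

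The core step is the convergence of $u_\varepsilon$. Here I would reproduce the Crandall--Liggett estimate: comparing two such discretizations, or the same scheme on two meshes, and using contractivity of the resolvents together with accretivity expressed through the bracket $[\cdot,\cdot]$, one derives a two-index (discrete doubling) bound showing that $(u_\varepsilon)$ is a Cauchy net in $C([0,T];X)$ as the mesh and the data approximation are refined. The density hypothesis $\overline{\mathrm{Dom}(A)}^{\|\cdot\|_X}=X$ guarantees that an arbitrary $u_0\in X$ can be approached by initial data in $\mathrm{Dom}(A)$. The limit $u\in C([0,T];X)$ is the mild solution associated with $(u_0,h)$.

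It then remains to identify the mild and the integral solutions. For existence I would verify that the mild solution satisfies the defining inequalities: for an arbitrary $(\hat u,\hat z)\in A$ I note that the graph point $\bigl(u_i,\,(u_{i-1}-u_i)/\delta_i+h_i\bigr)\in A$ is produced by the scheme, use accretivity in bracket form, sum over the time steps and pass to the limit to recover the integral inequalities of \eqref{eq:Abstr-Pb}. For uniqueness I would show that any integral solution $u$ coincides with the mild solution $\bar u$: the graph points $\bigl(\bar u_i,\,(\bar u_{i-1}-\bar u_i)/\delta_i+h_i\bigr)\in A$ generated by the Euler scheme are admissible test elements in the integral inequality satisfied by $u$, and a discrete Gronwall estimate followed by the limit $\varepsilon\to 0$ forces $\|u(t)-\bar u(t)\|=0$. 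The same mechanism yields, for two data sets, the contraction estimate $\|u(t)-v(t)\|\le\|u_0-v_0\|+\int_0^t\|h(\tau)-\tilde h(\tau)\|\,d\tau$.

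I expect the main obstacle to be the Crandall--Liggett Cauchy estimate: controlling the accumulation of errors across the implicit time steps requires the careful two-index estimate rather than a naive Gronwall argument, precisely because $A$ is nonlinear and one has at one's disposal only the contraction property of the resolvents and the sign of the bracket. Once this is in place, the verification that the mild solution is an integral solution and the comparison yielding uniqueness are comparatively routine manipulations of $[\cdot,\cdot]$ and of the defining inequalities.
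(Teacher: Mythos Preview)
Your proposal is correct and matches the paper's treatment: the paper does not prove this proposition but simply refers to \cite{BCP} for uniqueness and to \cite{BarthelemyBenilan} for the generalization, exactly as you anticipate. Your sketch of the underlying Crandall--Liggett--B\'enilan argument (resolvent iterations, Cauchy estimate, identification of mild and integral solutions via the bracket) is the standard content of those references and is accurate.
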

We refer to \cite{BCP} for the proof of uniqueness of an integral solution and to \cite{BarthelemyBenilan} for
a generalization relevant to our case: continuity of $u:[0,T]\to X$ can be relaxed, cf.~\eqref{processunitial}. We propose a variant of the above notion that we call \emph{integral-process solution}. This notion is motivated by an application in the setting where $X$ is a Lebesgue space on $\Omega\subset{\mathbb R}^\ell$ and $\nu$ is a \emph{nonlinear weak-$*$ limit} (see~\cite{eymard2000finite}) of approximate solutions.
\begin{definition}
Let  $A$ be an accretive operator on $X$, $h\in L^1(0,T;X)$ and $u_0\in X$. An $X$-valued function $\nu$ of $(t,\alpha)\in [0,T]\times [0,1]$ is an integral-process solution of abstract problem $u'+Au\ni h$ on $[0,T]$ with datum $\nu(0,\cdot,\alpha)\equiv u_0(\cdot)$, if 
 for all $(\hat{u},\hat z)\in A$
\begin{align}\label{intprocsolu}
\displaystyle\int_0^1\!\!\Bigl(\|\nu(t,\alpha)\!-\hat{u}\|-\|\nu(s,\alpha)\!-\hat{u}\|\Bigr)d\alpha\!
\leq\displaystyle\int_0^1\!\!\!\!\int_s^t\Bigl[v(\tau,\alpha)-\hat{u}, h(\tau)- \hat z\Bigr]d\tau d\alpha
\end{align}
for $0< s\leq t\leq T$  and the initial condition is satisfied in the sense
\begin{align}\label{processunitial}
\mbox{ess-}\lim\nolimits_{t\downarrow0} \textstyle{\int_0^1} \|\nu(t,\alpha)- u_0\|d\alpha=0.
\end{align}
\end{definition}
%
The main fact concerning integral-process solutions is the following result (\cite{mathese}).
 \begin{theorem}\label{th:Integral-Proc}
Assume that $A$ is $m$-accretive in $X$ and $u_0\in\overline{D(A)}$. Then $\nu$ is an integral-process solution of \eqref{eq:Abstr-Pb} if and only if $\nu$ is independent on $\alpha$ and for all $\alpha$, $\nu(.,\alpha)$ coincides with the unique integral and mild solution $u(\cdot)$ of \eqref{eq:Abstr-Pb}.
\end{theorem}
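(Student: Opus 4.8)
The plan is to prove the two implications separately, leaning throughout on the classical theory of \cite{BCP}: since $A$ is $m$-accretive and $u_0\in\overline{D(A)}$, the abstract problem \eqref{eq:Abstr-Pb} possesses a unique integral solution $u$, this integral solution coincides with the mild solution, and consequently $u$ is the strong $C([0,T];X)$-limit of the piecewise constant functions $u_\varepsilon$ produced by the implicit Euler scheme $\frac{u_i-u_{i-1}}{\delta_i}+A u_i\ni h_i$. The entire argument hinges on using the discrete couples generated by this scheme as admissible test pairs in the integral-process inequality \eqref{intprocsolu}.

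The converse implication is immediate. If $\nu(t,\alpha)\equiv u(t)$ is $\alpha$-independent and equals the integral solution, then for every $(\hat u,\hat z)\in A$ the integrands in \eqref{intprocsolu} cease to depend on $\alpha$, the factor $\int_0^1 d\alpha$ drops out, and \eqref{intprocsolu} reduces exactly to the defining inequality of an integral solution, which $u$ satisfies by assumption; likewise \eqref{processunitial} reduces to $\|u(t)-u_0\|\to 0$, i.e. continuity of the mild solution at $t=0$. Hence such a $\nu$ is an integral-process solution.

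For the direct implication, let $\nu$ be an integral-process solution and let $u$ be the unique integral (equivalently, mild) solution of the \emph{same} problem, with the same data $h$ and $u_0$. The target is the contraction estimate
\begin{equation*}
\int_0^1\|\nu(t,\alpha)-u(t)\|\,d\alpha\ \leq\ \int_0^1\|\nu(s,\alpha)-u(s)\|\,d\alpha,\qquad 0<s\leq t\leq T.
\end{equation*}
To reach it, I would fix the Euler discretization $(u_i)_i$ of $u$ and insert the couples $\bigl(u_i,\,h_i-\tfrac{u_i-u_{i-1}}{\delta_i}\bigr)\in A$ into \eqref{intprocsolu}, thereby controlling $\int_0^1\|\nu(t,\alpha)-u_i\|\,d\alpha$ at the mesh times. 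Combining these inequalities over the mesh and performing the doubling-of-time argument of \cite{BCP,BarthelemyBenilan} — now carried out under the extra average $\int_0^1\cdots\,d\alpha$, which simply rides along since every inequality is integrated in $\alpha$ before being recombined — one passes to the limit $\varepsilon\to 0$. Because $h$ is common to both problems, the surviving bracket term is $[\,\cdot\,,\,h-h\,]=[\,\cdot\,,0]=0$, leaving precisely the contraction displayed above.

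Finally I would send $s\downarrow 0$: the initial condition \eqref{processunitial} yields $\int_0^1\|\nu(s,\alpha)-u_0\|\,d\alpha\to 0$, while continuity of the mild solution gives $u(s)\to u_0$, so the right-hand side vanishes in the limit. This forces $\int_0^1\|\nu(t,\alpha)-u(t)\|\,d\alpha=0$ for every $t$, that is $\nu(t,\alpha)=u(t)$ for a.e. $(t,\alpha)$, which is simultaneously the $\alpha$-independence and the coincidence with the integral/mild solution. The main obstacle is the Euler-to-continuous passage to the limit while preserving the $\alpha$-average: one must control the bracket term, which is only upper semicontinuous and positively homogeneous (not linear) in its first argument, and verify measurability in $\alpha$ so that Fubini legitimizes interchanging $\int_0^1 d\alpha$ with the mesh summation and with $\lim_{\varepsilon\to 0}$. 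This is exactly the point where the abstract doubling technique of \cite{BCP} has to be re-examined in the process setting.
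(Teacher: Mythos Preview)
The paper does not actually prove this theorem; it only states the result and refers to the thesis \cite{mathese} for the proof. So there is no in-text argument to compare your proposal against.

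That said, your outline is the natural (and almost certainly the intended) one: test the integral-process inequality \eqref{intprocsolu} against the Euler iterates of the mild solution, run the B\'enilan--Crandall--Pazy doubling-of-time argument with the harmless extra $\int_0^1\,d\alpha$, and use the matching source term $h$ to kill the bracket contribution in the limit, yielding the $\alpha$-averaged contraction; then \eqref{processunitial} plus $u(s)\to u_0$ closes the argument. The converse direction is indeed trivial.

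Two minor points. First, the bracket $[u,v]$ is positively homogeneous and subadditive in its \emph{second} argument, and upper semicontinuous jointly; your parenthetical says ``first argument''. Second, the obstacle you flag is real but mild: since $\bigl|[u,v]\bigr|\leq\|v\|$ and $t\mapsto\|\nu(t,\alpha)-\hat u\|$ is measurable for each fixed $\hat u$, the $\alpha$-integrability and the dominated-convergence step in $\varepsilon\to 0$ go through exactly as in \cite{BCP,BarthelemyBenilan}, uniformly under $\int_0^1 d\alpha$. No genuinely new idea is needed beyond carrying the $\alpha$-average through the classical computation.
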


\vspace*{-5pt}\subsection{Convergence of the scheme}
Let us define the operator $ A_{f,\phi}$ on $L^1(\Omega;[0,u_{\max}])\subset X=L^1(\Omega)$ endowed with $\|\cdot\|_1$:
\begin{equation*}
(v,z)\!\in A_{f,\phi}=\!\left \{\begin{array}{ll}
 \!\! v \mbox{ such that  } v \mbox{ is a \emph{trace regular} solution of } (S), \mbox{ with  } g=v+z
\end{array}
\right\}
\end{equation*}
(instead of $L^1(\Omega)$ we can work in $L^1(\Omega;[0,u_{\max}])$ due to the confinement principle for solutions of $(S)$).
The main result of this paper is the following theorem.
\begin{theorem}\label{unicite}
   Assume operator $A_{f,\phi}$ on $L^1(\Omega;[0,u_{\max}])$ is $m$-accretive densely defined, then any entropy-process-solution of $(P)$ is its unique entropy solution. In particular, the scheme \eqref{esti0},\eqref{esti1} for discretization of $(P)$ in the sense \eqref{ESP} is convergent:
\begin{equation*}
\forall p\in[1,+\infty)\;\;\; \uOdt\longrightarrow u \; \mbox{ in } L^p(0,T\times\Omega) \;\mbox{ as }\; \max(\delta t, h)\longrightarrow 0.
\end{equation*}
\end{theorem}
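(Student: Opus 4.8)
The plan is to identify any entropy-process solution $\mu$ of $(P)$ with an \emph{integral-process solution} of the abstract Cauchy problem \eqref{eq:Abstr-Pb} associated with $A_{f,\phi}$, $h\equiv 0$ and datum $u_0$, and then to let Theorem~\ref{th:Integral-Proc} do the reduction. Recall that $(\hat u,\hat z)\in A_{f,\phi}$ means that $\hat u$ is a trace-regular solution of $(S)$ with $g=\hat u+\hat z$, i.e. $\hat z=\mbox{div}\,f(\hat u)-\Delta\phi(\hat u)$; since $(P)$ reads $u'+A_{f,\phi}u\ni 0$, the target is to establish, for every such pair, the integral-process inequality \eqref{intprocsolu} with $h=0$ together with the initial condition \eqref{processunitial}. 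Once this is done, $m$-accretivity and density of $A_{f,\phi}$ (so that $u_0\in\overline{D(A_{f,\phi})}$) allow us to invoke Theorem~\ref{th:Integral-Proc}.

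The core of the argument, and its main difficulty, is a \emph{weak-strong comparison} between the entropy-process solution $\mu$ and an arbitrary trace-regular stationary solution $\hat u$, performed by doubling of variables with symmetry breaking as in \cite{BF,BG}. Concretely, I would write the entropy-process inequalities of Definition~\ref{entrprosol} for $\mu(t,x,\alpha)$ with Kruzhkov level $k=\hat u(y)$, test against $\xi(t,x)\,\rho_n(x-y)$ with $\rho_n$ a diagonal-concentrating kernel and a time cut-off selecting $(s,t)$, and use for $\hat u$ not an entropy inequality but its stationary equation in trace-regular form. The degenerate diffusion terms are treated in the standard way: the strong $L^2$ convergence of $\phi(\uOdt)$ and the weak convergence of $\nabla_{\!\mathcal O}\phi(\uOdt)$ furnished by Proposition~\ref{conventropyprocess} let the contributions $-\nabla|\phi(\mu)-\phi(k)|\cdot\nabla\xi$ combine into a nonnegative remainder that is discarded, and passage to the diagonal $x=y$ converts the interior terms precisely into the $L^1$ bracket $[\mu-\hat u,-\hat z]$ on the right-hand side of \eqref{intprocsolu}.

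The delicate point is the boundary. The process solution $\mu$ possesses no usable normal trace of its flux $\mathcal F[\mu]\cdot\eta$, so the boundary integrals cannot be symmetrized; this is exactly where symmetry breaking is indispensable. Because $\hat u$ is trace-regular, the normal flux $\mathcal F[\hat u]\cdot\eta=(f(\hat u)-\nabla\phi(\hat u))\cdot\eta$ has a strong boundary trace which \emph{vanishes} by the zero-flux condition in $(S)$. Using this together with the resulting identity $f(\hat u)\cdot\eta=\nabla\phi(\hat u)\cdot\eta$ on $\partial\Omega$ and the elementary inequality $|a|\ge \mathrm{sign}(b)\,a$, the boundary penalty $\int_{\partial\Omega}|f(\hat u)\cdot\eta|\,\xi$ inherited from $\mu$'s inequality is matched against the boundary contribution coming from $\hat u$ and is shown to carry the favourable sign, hence is dropped. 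No positive boundary term then survives in the limit, and \eqref{intprocsolu} follows; the initial condition \eqref{processunitial} is obtained from the weak time-continuity at $t=0$ encoded in the initial term $\int_\Omega|u_0-k|\xi(0,x)\,dx$ of \eqref{content}, exactly as in the classical framework.

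With \eqref{intprocsolu}--\eqref{processunitial} established, Theorem~\ref{th:Integral-Proc} forces $\mu(t,x,\alpha)\equiv u(t,x)$ to be independent of $\alpha$ and to coincide with the unique integral and mild solution of \eqref{eq:Abstr-Pb}. For an $\alpha$-independent $\mu$ the inequalities of Definition~\ref{entrprosol} collapse to \eqref{ESP} — using that $\phi$ is nondecreasing, so $\mathrm{sign}(u-k)\nabla\phi(u)=\nabla|\phi(u)-\phi(k)|$ — whence $u$ is the unique entropy solution of $(P)$. Convergence of the scheme is then a soft consequence: since the nonlinear weak-$*$ limit is the uniquely determined $\alpha$-independent function $u$, every subsequence of $(\uOdt)$ has a further subsequence with this same limit, so the whole family converges in the nonlinear weak-$*$ sense to $u$; testing this convergence against $g(s)=s$ and $g(s)=s^2$ gives both $\uOdt\rightharpoonup u$ and $(\uOdt)^2\rightharpoonup u^2$, hence $\uOdt\to u$ strongly in $L^2(Q)$, and the uniform bound $0\le\uOdt\le u_{\max}$ upgrades this to strong convergence in every $L^p(Q)$, $p\in[1,+\infty)$.
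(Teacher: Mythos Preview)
Your proposal is correct and follows essentially the same route as the paper's own proof: compare the entropy-process solution $\mu$ with trace-regular stationary solutions $\hat u$ via the symmetry-breaking doubling of \cite{BF,BG} to obtain the integral-process inequality \eqref{intprocsolu}, then invoke Theorem~\ref{th:Integral-Proc} to reduce $\mu$ to the unique integral (hence entropy) solution, and deduce strong $L^p$ convergence from $\alpha$-independence of the nonlinear weak-$*$ limit. Your write-up is in fact more detailed than the paper's proof, which only sketches these steps and defers the comparison argument and the $L^p$ upgrade to the cited references.
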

\begin{proof}\smartqed
   First, in Proposition~\ref{conventropyprocess} we prove that the approximate solutions $\uOdt$ converge towards an entropy-process solution $\mu$. Then, with the technique of \cite{BF,BG} we compare the entropy-process solution $\mu$ and a trace-regular solution $\hat u$ of stationary problem $(S)$.  We find that $\mu$ is also an integral-process solution. By Theorem~\ref{th:Integral-Proc}, $\mu$ is independent on the variable $\alpha$. Therefore $\mu(\cdot,\alpha)$ coincides with the unique integral solution of the abstract evolution problem \eqref{eq:Abstr-Pb} governed by operator $A_{f,\phi}$; we know from the analysis of \cite{BF,BG} that it is also the unique entropy solution of $(P)$.
  \qed
\end{proof}
 Theorem~\ref{unicite} is applicable in the following three cases where trace-regularity for the solutions
 of $(S)$ can be justified, at least for a dense set of source terms.
\begin{proposition}\label{prop:hyperbolic}
Assume that  $\ell\geq1$, and $u_c=u_{\max}$ (i.e., $(P)$ is purely hyperbolic). Then $A_{f,\phi}$ is $m$-accretive densely defined on  $L^1(\Omega;[0,u_{\max}])$.
\end{proposition}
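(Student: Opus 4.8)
The plan is to verify separately the three constituents of the claim: accretivity of $A_{f,\phi}$, the range condition $\mathrm{Range}(\mathrm{Id}+\lambda A_{f,\phi})=L^1(\Omega;[0,u_{\max}])$ for $\lambda>0$, and density of $D(A_{f,\phi})$. Since $u_c=u_{\max}$ forces $\phi\equiv0$ on $[0,u_{\max}]$, the stationary problem $(S)$ reduces to the scalar conservation law $\hat u+\mbox{div }f(\hat u)=g$ with zero-flux condition $f(\hat u).\eta=0$, and the flux $\mathcal F[\hat u]=f(\hat u)$ is a bounded function of $\hat u$. The decisive simplification, specific to the hyperbolic regime, is the fact recalled in the introduction that \emph{every} entropy solution of $(S)$ is trace-regular (\cite{VAS,PAN1,BFK}); consequently the operator built from trace-regular solutions coincides with the one built from all Kruzhkov entropy solutions, and the normal trace of $f(\hat u)$ exists in the strong $L^1$ sense and vanishes on $\partial\Omega$.

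For accretivity I would compare two pairs $(v_1,z_1),(v_2,z_2)\in A_{f,\phi}$, where $v_i$ is an entropy solution of the resolvent problem $v_i+\lambda\,\mbox{div }f(v_i)=g_i$ with $g_i=v_i+\lambda z_i$ (the parameter $\lambda$ only rescales the flux to $\lambda f$, which preserves all structural assumptions). Applying the Kruzhkov doubling of variables to $v_1,v_2$ produces an interior dissipation term together with a boundary integral of the form $\int_{\partial\Omega}\mathrm{sign}(v_1-v_2)\bigl(f(v_1)-f(v_2)\bigr).\eta$; since both normal traces $f(v_i).\eta$ vanish, this boundary integral vanishes, yielding the $L^1$-comparison $\|v_1-v_2\|_1\le\|g_1-g_2\|_1$. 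This is exactly $\|v_1-v_2\|_1\le\|(v_1-v_2)+\lambda(z_1-z_2)\|_1$, i.e. accretivity (and single-valuedness of the resolvent). This step is essentially the stationary $L^1$-contraction already available for the hyperbolic problem in \cite{BFK}.

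For the range condition I would first take $g\in L^\infty(\Omega;[0,u_{\max}])$ and construct an entropy solution $\hat u$ of $\hat u+\lambda\,\mbox{div }f(\hat u)=g$ by vanishing viscosity: solving $\hat u_\varepsilon+\lambda\,\mbox{div }f(\hat u_\varepsilon)-\varepsilon\Delta\hat u_\varepsilon=g$ with the associated Neumann-type condition, deducing confinement $\hat u_\varepsilon\in[0,u_{\max}]$ from $f(0)=f(u_{\max})=0$, and passing to the limit using the strong compactness provided by the genuine nonlinearity of $f$ on $[0,u_c]=[0,u_{\max}]$ (\cite{PAN-comp}). The limit $\hat u$ is an entropy solution, hence trace-regular, so $(\hat u,\lambda^{-1}(g-\hat u))\in A_{f,\phi}$ and $g$ lies in the range. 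Because the resolvent is non-expansive and $L^\infty$ is dense in $L^1$, the range is closed, so it is all of $L^1(\Omega;[0,u_{\max}])$; the analogous conclusion for every $\lambda>0$ follows from the flux-rescaling remark. For density of the domain I would use the resolvent consistency $J_\lambda g\to g$ in $L^1$ as $\lambda\downarrow0$ for $g\in(BV\cap L^\infty)(\Omega;[0,u_{\max}])$, where $J_\lambda:=(\mathrm{Id}+\lambda A_{f,\phi})^{-1}$: the interior discrepancy is of order $\lambda\,\mathrm{TV}(g)$, while the boundary-layer correction that enforces $f(J_\lambda g).\eta=0$ occupies a region of vanishing measure. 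Since $J_\lambda g\in D(A_{f,\phi})$, density follows.

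The hard part will be the careful treatment of the boundary terms, both in the doubling-of-variables comparison and in the vanishing-viscosity passage to the limit --- precisely the difficulty that is, in the degenerate cases, the central obstacle of the paper. In the present hyperbolic regime this difficulty dissolves thanks to the automatic trace-regularity of entropy solutions, which is why the proposition holds without the additional structural hypotheses required in the parabolic and mixed cases; all remaining steps are routine adaptations of the classical $L^1$ theory for scalar conservation laws and can be found in \cite{BFK,mathese}.
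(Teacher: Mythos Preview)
Your proposal is correct and follows essentially the same approach as the paper: the paper's own justification is the one-line remark that the proposition ``follows by the strong trace results of \cite{VAS,PAN1} (cf.~\cite{BFK})'', and your argument is precisely a detailed unpacking of that remark --- the automatic trace-regularity of entropy solutions in the purely hyperbolic case is the key point, after which accretivity, the range condition, and density reduce to the standard $L^1$ theory of \cite{BFK}.
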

\begin{proposition}\label{prop:parabolic}
Assume that $\ell\geq1$ and $u_c=0$ (i.e. $(P)$ is non-degenerate parabolic). Then $A_{f,\phi}$ is $m$-accretive densely defined on  $L^1(\Omega;[0,u_{\max}])$
if $f\circ\phi^{-1}\!\in\mathcal{C}^{0,\gamma},\gamma>0$.
\end{proposition}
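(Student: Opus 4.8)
The plan is to verify directly the two defining properties of an $m$-accretive densely defined operator, reading the resolvent equation $(\mathrm{Id}+\lambda A_{f,\phi})v=g$ as the stationary problem $(S)$. After the harmless rescaling $f\mapsto\lambda f$, $\phi\mapsto\lambda\phi$ — which preserves the zero-flux boundary condition, the confinement range $[0,u_{\max}]$, non-degeneracy of $\phi$, and the Hölder class of $f\circ\phi^{-1}$ — it suffices to treat $\lambda=1$. Throughout, recall that $(v,z)\in A_{f,\phi}$ means precisely that $v$ is a \emph{trace-regular} entropy solution of $(S)$ with $g=v+z$, so that both $v$ and its flux trace are at our disposal by definition.

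First I would establish accretivity. Given two pairs $(v,z),(\hat u,\hat z)\in A_{f,\phi}$, both $v$ and $\hat u$ are trace-regular, hence the strong $L^1$ traces of the normal fluxes $\mathcal F[v].\eta$ and $\mathcal F[\hat u].\eta$ exist on $\partial\Omega$ and vanish in the zero-flux sense. I would then run the standard Kruzhkov doubling of variables on the two stationary entropy formulations of $(S)$: the interior terms produce the usual dissipative structure, while the boundary contributions — exactly the obstruction in the general case — are controlled precisely because both normal fluxes admit strong traces. This yields the $L^1$-comparison $\|v-\hat u\|_1\le\|(v+z)-(\hat u+\hat z)\|_1$, which is the $L^1$-accretivity of $A_{f,\phi}$ and, after rescaling, gives the accretivity inequality for every $\lambda>0$.

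Second I would establish the range condition. For $g\in L^\infty(\Omega;[0,u_{\max}])$ I would construct an entropy solution $v$ of $(S)$ by elliptic regularization / vanishing viscosity, the confinement principle keeping $v\in[0,u_{\max}]$, with existence and the entropy property following from the arguments of \cite{PAN-comp,BF}. The decisive point is that, since $u_c=0$ makes $\phi$ strictly increasing and $f\circ\phi^{-1}\in\mathcal{C}^{0,\gamma}$, $\gamma>0$, the regularity result of \cite{BF} applies and guarantees that this entropy solution is trace-regular; hence $(v,g-v)\in A_{f,\phi}$ and $g\in R(\mathrm{Id}+A_{f,\phi})$. As $L^\infty(\Omega;[0,u_{\max}])$ is dense in $L^1(\Omega;[0,u_{\max}])$ and, by Step~1, the resolvent $J_\lambda=(\mathrm{Id}+\lambda A_{f,\phi})^{-1}$ is an $L^1$-contraction, $J_\lambda$ extends to all of $L^1(\Omega;[0,u_{\max}])$ and the range condition holds on a dense set, which by accretivity suffices for $m$-accretivity of $A_{f,\phi}$ (passing to the closure if necessary).

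Finally, for dense definition I would show $J_\lambda g\to g$ in $L^1$ as $\lambda\downarrow0$ for $g$ in the dense class $L^\infty(\Omega;[0,u_{\max}])$: testing the resolvent equation and using the energy / $BV$-type bounds shows that the convective-diffusive correction is $o(1)$ as $\lambda\to0$, so that $v_\lambda:=J_\lambda g\in D(A_{f,\phi})$ converges to $g$; since $\overline{D(A_{f,\phi})}$ is closed and contains this dense set, $\overline{D(A_{f,\phi})}=L^1(\Omega;[0,u_{\max}])$. The main obstacle throughout is the trace-regularity of stationary solutions invoked in Steps~1 and~2 — the existence of a strong $L^1$ boundary trace of the normal flux $\mathcal F[v].\eta$ — and it is exactly here that the hypothesis $f\circ\phi^{-1}\in\mathcal{C}^{0,\gamma}$, $\gamma>0$, is indispensable; once \cite{BF} is invoked for this point, the accretivity, range, and density arguments are the standard ones of nonlinear semigroup theory.
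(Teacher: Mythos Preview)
Your proposal is correct and follows exactly the route the paper indicates: the paper's own justification of this proposition is simply the one-line reference ``justified like in \cite{BF}'', and what you have written is precisely the unpacking of that reference---accretivity via doubling of variables once both stationary solutions are trace-regular, the range condition via vanishing-viscosity construction plus the trace-regularity result of \cite{BF} (which is where the hypothesis $f\circ\phi^{-1}\in\mathcal{C}^{0,\gamma}$ enters), and density of the domain by the standard resolvent argument. There is nothing to add; you have supplied more detail than the paper itself.
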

\begin{proposition}\label{prop:degenerate1D}
Assume that  $\Omega=(a,b)$ (thus, $\ell=1$). Then $A_{f,\phi}$ is $m$-accretive densely defined on  $L^1(\Omega;[0,u_{\max}])$.
\end{proposition}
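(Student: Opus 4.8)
The plan is to establish the two ingredients of $m$-accretivity — accretivity of $A_{f,\phi}$ and the range condition — together with density of its domain, taking advantage of the fact that in one space dimension the trace-regularity required in the definition of $A_{f,\phi}$ is automatic. The cornerstone is the elementary remark recalled in the introduction: if $v$ is an entropy solution of the resolvent equation $v+\lambda\,(f(v)-\phi(v)_x)_x=g$ on $(a,b)$ with zero flux and $g\in L^\infty$ (the case $\lambda=1$ being exactly $(S)$ with source $g$), then $(f(v)-\phi(v)_x)_x=\lambda^{-1}(g-v)\in L^\infty(a,b)$, so the flux $\mathcal{F}[v]=f(v)-\phi(v)_x$ is an antiderivative of an $L^\infty$ function, hence Lipschitz continuous on $[a,b]$ with strong traces at the endpoints. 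Consequently every entropy solution of $(S)$ is trace-regular, the zero-flux condition holds pointwise, and ``trace regular'' does not restrict the definition of $A_{f,\phi}$.

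For the range condition I would solve the resolvent equation for arbitrary $g\in L^1(\Omega;[0,u_{\max}])$; note that any $[0,u_{\max}]$-valued function is automatically bounded, so $g\in L^\infty$. I would use vanishing viscosity, replacing $\phi$ by $\phi+\varepsilon\,\mathrm{id}$ to obtain nondegenerate problems solvable by standard monotone/variational methods. The two uniform estimates are the confinement $0\le v^\varepsilon\le u_{\max}$, which follows from $f(0)=f(u_{\max})=0$ in \eqref{fmax} and the comparison principle (so that $[0,u_{\max}]$ is invariant), and the energy bound controlling $(\phi(v^\varepsilon)+\varepsilon v^\varepsilon)_x$ in $L^2$. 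Passing to the limit $\varepsilon\downarrow 0$ requires compactness: away from the degeneracy the $H^1$-bound on $\phi(v^\varepsilon)$ suffices, while on $\{v\le u_c\}$ the genuine nonlinearity of $f$ supplies the strong compactness (in the spirit of \cite{PAN-comp}). One obtains an entropy solution $v\in[0,u_{\max}]$ with $\phi(v)\in H^1(a,b)$, which by the remark above is trace-regular; hence $g\in R(I+\lambda A_{f,\phi})$ for every $\lambda>0$.

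Accretivity I would obtain by doubling of variables applied to two stationary (resolvent) equations with data $g_1,g_2$, following \cite{BF,BG}. Since both fluxes $\mathcal{F}[v_i]$ are continuous up to $\{a,b\}$ and vanish there, the boundary contributions created by the doubling — precisely the terms matching $|f(k)\cdot\eta|$ in \eqref{ESP} tested against the strong traces — cancel, leaving the interior Kato inequality $\int_a^b \mathrm{sign}(v_1-v_2)\,\bigl[(f(v_1)-\phi(v_1)_x)_x-(f(v_2)-\phi(v_2)_x)_x\bigr]\,dx\ge 0$, i.e. the $L^1$-accretivity of $A_{f,\phi}$. The delicate point is to handle simultaneously the hyperbolic zone $\{v\le u_c\}$, where only the Kruzhkov entropies are at hand, and the parabolic zone $\{v\ge u_c\}$, where the $H^1$-regularity of $\phi(v)$ enters; this is exactly where one-dimensional continuity of the flux is decisive and where the argument of \cite{BG} applies.

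Finally, density of $D(A_{f,\phi})$ follows from the resolvent: for $g\in L^1(\Omega;[0,u_{\max}])$ the elements $v_\lambda=(I+\lambda A_{f,\phi})^{-1}g\in D(A_{f,\phi})$ satisfy $v_\lambda\to g$ in $L^1$ as $\lambda\downarrow 0$, which one reads off the resolvent equation together with the uniform bounds above (so that $\lambda A_{f,\phi}v_\lambda\to 0$); equivalently, smooth functions valued in $(0,u_{\max})$ and satisfying the zero-flux condition lie in $D(A_{f,\phi})$ and are dense. Collecting accretivity, the range condition and density yields that $A_{f,\phi}$ is $m$-accretive and densely defined, which is the claim. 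I expect the main obstacle to be the range condition, namely the construction of the stationary entropy solution with the confinement and $H^1$ bounds and the passage to the limit by strong compactness; once trace-regularity is secured by the one-dimensional flux-continuity remark, accretivity and density become comparatively routine.
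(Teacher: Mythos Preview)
Your proposal is correct and follows the same route as the paper, which does not give a self-contained proof but refers to \cite{BG} and highlights precisely the one-dimensional observation you build on, namely that $(f(\hat u)-\phi(\hat u)_x)_x=g-\hat u\in L^\infty(a,b)$ forces $\mathcal F[\hat u]\in \mathbf{C}([a,b])$ and hence trace-regularity of every stationary entropy solution. Your breakdown into range condition via vanishing viscosity and strong compactness (cf.~\cite{PAN-comp}), accretivity via doubling of variables exploiting the continuous boundary flux as in \cite{BF,BG}, and density via the resolvent, is exactly the architecture of the argument in \cite{BG} that the paper invokes.
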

Prop.~\ref{prop:hyperbolic} follows by the strong trace results of \cite{VAS,PAN1} (cf. \cite{BFK}), Prop.~\ref{prop:parabolic} is justified like in \cite{BF}, while Prop.~\ref{prop:degenerate1D} was an ingredient of the uniqueness proof in \cite{BG}.
%
%
%

\begin{remark}\label{rem:direct-IPS}
Actually, the use of entropy-process solutions can be circumvented. Observe that the stationary problem $(S)$ can be discretized
with the scheme analogous to the time-implicit scheme used for the evolution problem $(P)$.
 Consider the situation where strong compactness (and convergence to $\hat u\in \text{Dom}(A_{f,\phi})$) can be proved for approximate solutions $\hat u_{\mathcal O}$ of $(S)$ but only nonlinear weak-$*$ compactness for approximate solutions $\uOdt$ of $(P)$ is known (this occurs when $\ell=1$, where compactness of $\hat u_{\mathcal O}(x_i)$, for all $x_i\in \mathbb{Q}$, is immediate: see the arguments developed in \cite{A-SpringerProc}).
 Then convergence of the stationary scheme is easily proved, moreover, one infers inequalities \eqref{intprocsolu} for the limit $\nu(\cdot,\alpha)$ of $\uOdt$. Then, the result of Theorem~\ref{th:Integral-Proc}
 proves convergence of the scheme for the evolution problem. In a future work, this argument will
 be applied to a large variety of one-dimensional degenerate parabolic conservation laws with boundary
 conditions or interface coupling conditions.
\end{remark}

\vspace*{-5pt} \section{Numerical experiments}
 We conclude with $1D$ numerical illustrations  presented in Fig.~\ref{hfigure}(a),(c),
 obtained with the explicit analogue of the scheme~\eqref{esti0},\eqref{esti1} under the \emph{ad hoc} CFL restrictions. On this occasion, we use the scheme to highlight the importance of hypothesis \eqref{fmax}. In the test of Fig.~\ref{hfigure}(b) assumption \eqref{fmax} fails, and a boundary  layer appears. If one refines the mesh one observes convergence of $u_ {\mathcal O_h, \delta t_h}$ towards a function bounded by $\| u_0 \|_\infty$ while the sequence $(u_ {\mathcal O_h, \delta t_h})_h$ seems unbounded.
   However, the condition of zero flux imposed in \eqref{esti1} is relaxed in the limit, making formulation \eqref{ESP} inappropriate outside the framework \eqref{fmax}. Introduction of appropriate boundary formulation
   satisfied by the limit of the scheme, in absence of \eqref{fmax}, is postponed to future work.
\begin{figure}[!ht]
  \vspace{-42pt}\hspace*{-10pt}
       \begin{minipage}[b]{0.3\linewidth}
          \centering \scalebox{0.2}{\includegraphics{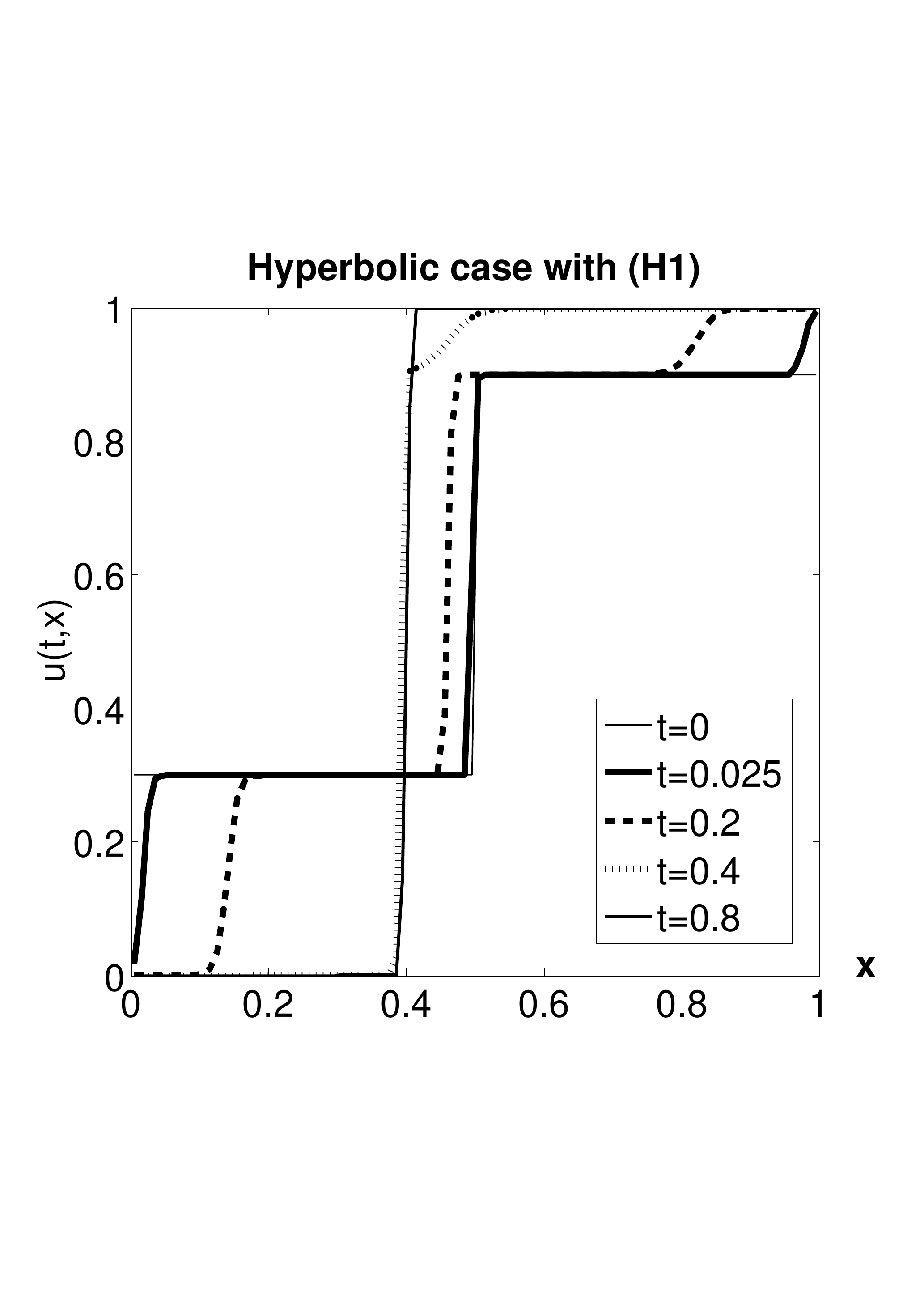}}
       \end{minipage}\hfill
      \begin{minipage}[b]{0.3\linewidth}
       \centering \scalebox{0.2}{\includegraphics{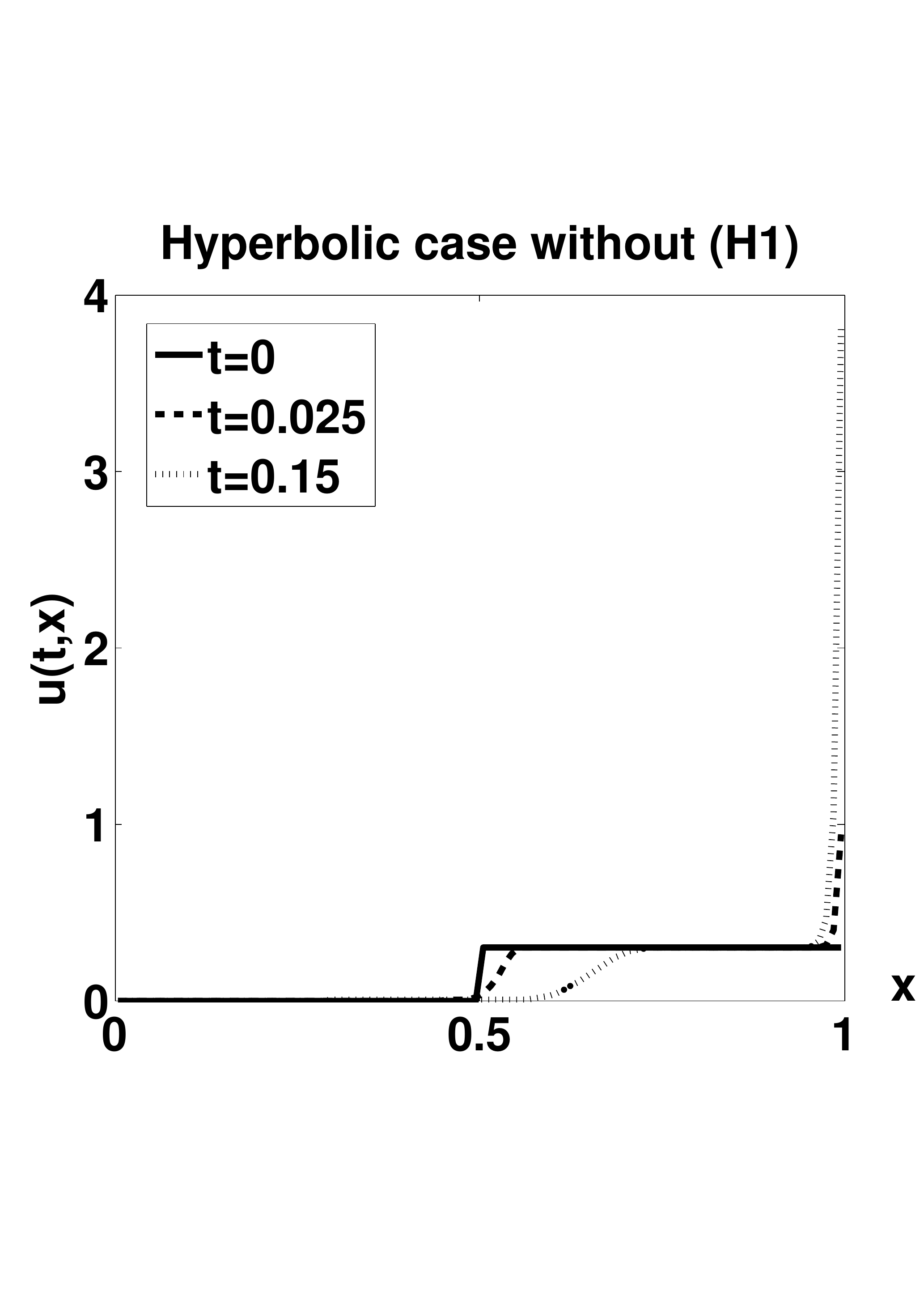}}
       \end{minipage}\hfill
       \begin{minipage}[b]{0.30\linewidth}
          \centering \scalebox{0.2}{\includegraphics{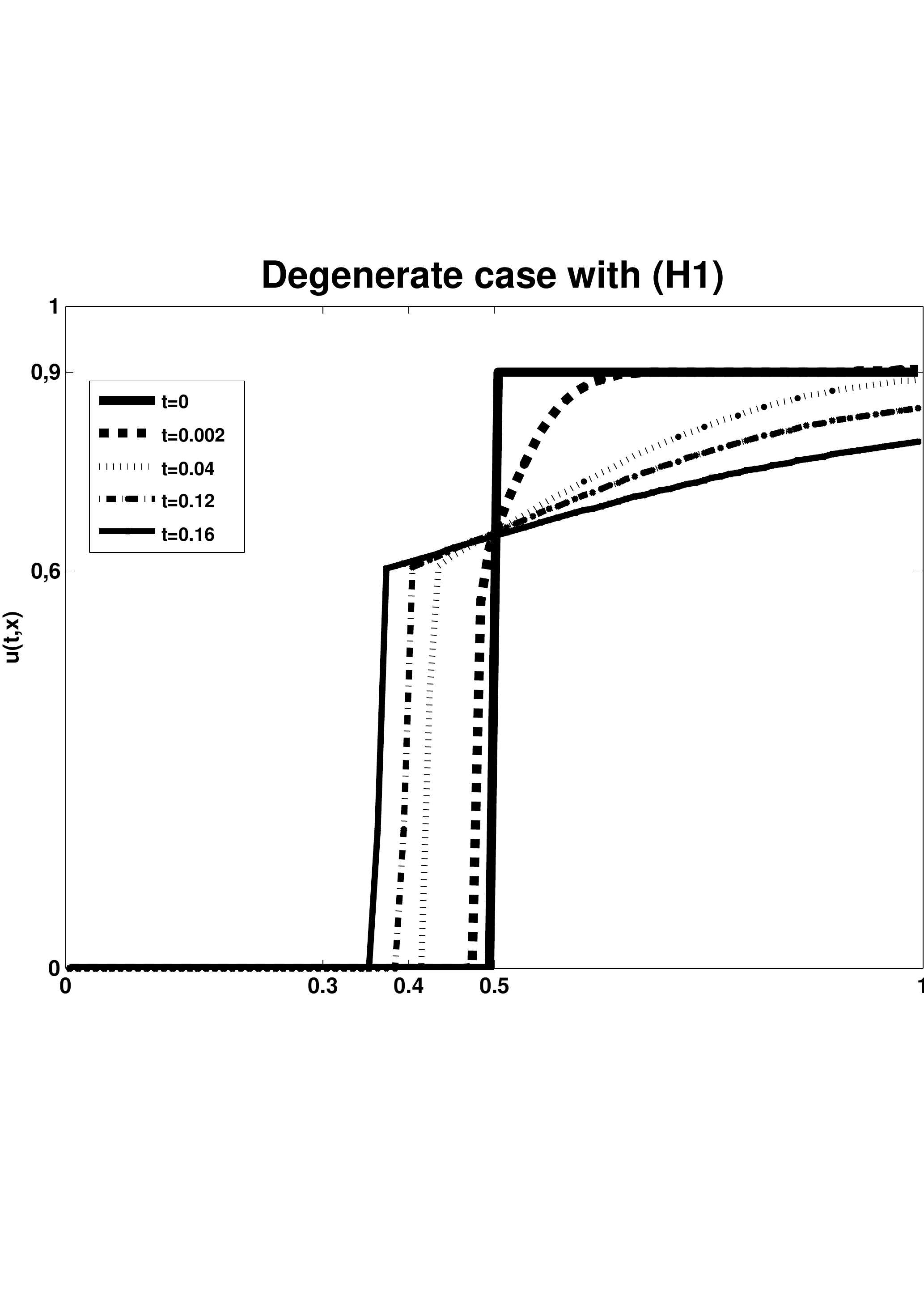}}
                 \end{minipage}\hfill
       \vspace{-35pt}
\caption{
(a)  $f(u)=u(1-u)$, $\phi\equiv 0$\;\;\;(b) \;$f(u)=\frac{u^2}{2}$, $\phi\equiv 0$\;\; \;(c)\; $f(u)=u(1-u)$, $\phi(u)=(u-0.6)^+$.}
\label{hfigure}
    \end{figure}


\vspace*{-22pt}\begin{acknowledgement}
This work has been supported by the French ANR project CoToCoLa.
\end{acknowledgement}

\vspace*{-15pt}

\end{document}